\newtheorem{theorem}{Theorem}
\newtheorem{proposition}{Proposition}
\theoremstyle{Definition}
\theoremstyle{remark}
\newtheorem{remark}[theorem]{Remark}
\theoremstyle{question}
\newtheorem{question}{Question}
\theoremstyle{corollary}
\numberwithin{equation}{section}
\newcommand{\Q}{\mathbb{Q}}
\numberwithin{equation}{section}
\begin{document}

\newcommand{\rx}{\mathcal{R}(X)}

\title{A note on Galois groups and local degrees}
\author{Sara Checcoli}
\address{Institut Fourier\\ 100 rue des Maths, 
St. Martin d'Heres, France}\email{sara.checcoli@univ-grenoble-alpes.fr}

\begin{abstract}
In this paper, we consider infinite Galois extensions of number fields and study the relation between their local degrees and the structure of their Galois groups.
It is known that, if $K$ is a number field and $L/K$ is an infinite Galois extension of group $G$, then the local degrees of $L$ are uniformly bounded at all rational primes if and only if $G$ has finite exponent. In this note we show that the non uniform boundedness of the local degrees is not equivalent to any group theoretical property. More precisely, we exhibit several groups that admit two different realisations over a given number field, one with bounded local degrees at a given set of primes and one with infinite local degrees at the same primes.
\end{abstract}

\maketitle

\section{Introduction}
This note is about the characterisation of infinite Galois  extensions of number fields with special local behaviours in terms of their Galois group. 

We say that a subfield  $F$ of ${\overline{\mathbb{Q}}}$ has \emph{bounded local degrees at a prime number $p$} if it can be embedded in some finite extension of $\Q_p$. We say that $F$ has \emph{uniformly bounded local degrees} if there exists a positive integer $n$ such that, for every prime $p$, $F$ can be embedded in some finite extension of $\Q_p$ of degree at most $n$.

Clearly all number fields have uniformly bounded local degrees. 
An example of an infinite extension of $\Q$ with uniformly bounded local degrees, given by Bombieri and Zannier in \cite [Proposition 1]{BZ}, is  the compositum of all extensions of degree at most $d$ of a number field. 

In \cite{CZ} and \cite{C} it is proved that a Galois extension of a number field has uniformly bounded local degrees if and only if its Galois group has finite exponent. So, for a Galois extension, the  uniform boundedness of the local degrees translates into a property of its Galois group. The aim of this paper is to investigate whether the same happens for Galois extensions with bounded local degrees at some primes. More precisely:
\begin{question}\label{Q1}
 Let $\mathcal{S}$ be a set of rational primes, $K$ a number field and $L/K$ a Galois extension of group $G$. Does there exist a group theoretical property on $G$ equivalent to the boundedness of the local degrees of $L/K$ at all primes in $\mathcal{S}$?
\end{question}
This question has some interest also in connection to the \emph{Bogomolov property (B)}, a property on the height of algebraic numbers, introduced by Bombieri and Zannier in \cite{BZ}: a subfield $L$ of $\overline{\Q}$ has \emph{property (B)} if the absolute logarithmic Weil height of every element in $L$ is either zero or is bigger than an absolute positive constant. 

Northcott's theorem easily implies that every number field satisfies property (B), while to determine whether property (B) holds  for an infinite algebraic extension of $\Q$ is in general a difficult problem. Nevertheless, property (B) was proved to be true for several fields, including the maximal totally real extension $\Q^{tr}$ of $\Q$ (by Schinzel in \cite{Sch}),  the maximal abelian extension of a number field $K$ (by Amoroso and Zannier, see \cite{AZ1} and \cite{AZ2}, generalising a former result of  Amoroso and Dvornicich in \cite{AD} for $K=\Q$) and the field generated over $\Q$ by all the torsion points of an elliptic curve defined over $\Q$ (by Habegger in \cite{Hab}).

A connection between the boundedness of the local degrees and property (B) was established in \cite{BZ}, where the authors prove that every Galois extension $L/K$ of a number field $K$ having bounded local degrees at some prime  enjoys  property (B) (so, in particular, the field of totally $p$-adic numbers $\Q^{tp}$ has (B), yielding a $p$-dic analogue of Schinzel's result). This was later generalised by Amoroso, David and Zannier in \cite{ADZ} where it is proved that, if $G$ is the Galois group of $L/K$, property (B) holds for $L$ under the weaker assumption that the subfield of $L$ fixed by the center of $G$ has bounded local degrees at some prime.

In \cite{ADZ} the authors also define property (B) for groups:
a profinite group $G$ has \emph{property (B)} if for every number field $K$ and for every Galois extension $L/K$ of group $G$, the field $L$ has property (B).

To the best of the author's knowledge, all the known examples of groups having property (B) are groups $G$ such that the quotient of $G$ by its center has finite exponent: indeed, in this case, if $L/K$ is a Galois extension of group $G$, the subfield of $L$ fixed by the center of $G$ will have \emph{uniformly} bounded local degrees at \emph{all} primes. It is natural to investigate whether other groups with (B) exist and what can be said about their structure.  In this respect, Question \ref{Q1} asks whether it is possible to characterise those groups having (B) because of the (non uniform) boundedness of the local degrees at some primes of their realisations over any number field. We recall that if $G$ is a group and $K$ is a number field a \emph{realisation of $G$ over $K$} is a Galois extension $L/K$ of group $G$. 

In this paper we give a negative answer to Question \ref{Q1}, even when the local degrees are bounded at all rational primes, by proving the following result:
\begin{theorem}\label{T1}
Let $\mathcal{S}$ be a set of rational primes. Let $K$ be a number field and let $\mu(K)$ be the group of its roots of unity. 
Let $G=\prod_{m\geq 1} G_m$ be a direct product of a family of finite groups $\{G_m\}_m$ with unbounded exponents and suppose that, for every $m$, the group $G_m$ satisfies one of the following conditions:
\begin{enumerate}
\item\label{l1} $G_m$ is abelian of odd order;
\item\label{l2} $G_m$ is solvable of order prime to $|\mu(K)|$;
\item\label{l3} $G_m$ is an iterated semidirect products of abelian groups of order not divisible by the primes in $\mathcal{S}$.
\end{enumerate}
Then $G$ has a realisation over $K$ with bounded local degrees at all primes in $\mathcal{S}$. If moreover the groups $G_m$'s have coprime orders, then $G$ admits also a realisation over $K$ with unbounded local degrees at all primes in $\mathcal{S}$.
\end{theorem}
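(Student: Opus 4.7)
The plan is to construct each $G_m$ separately as $\mathrm{Gal}(L_m/K)$ for a suitable finite Galois extension $L_m/K$ with prescribed local behaviour at the primes in $\mathcal{S}$, and then take $L=\prod_m L_m$ inside $\overline{K}$. For the Galois group of $L/K$ to equal $\prod_m G_m$, the $L_m$ must be pairwise linearly disjoint over $K$; in the second part of the theorem this is automatic from the coprimality of the $|G_m|$, while in the first part I would enforce it by constructing the $L_m$ from pairwise disjoint sets of auxiliary primes, so that their conductors are pairwise coprime.

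For the bounded-local-degree realisation I would arrange each $L_m$ to split completely at every prime of $K$ lying above any $p\in\mathcal{S}$. Fixing compatibly primes $\mathfrak{P}_m$ of $L_m$ above a prime $\mathfrak{p}$ of $K$ above $p$, the completion $(L_m)_{\mathfrak{P}_m}$ equals $K_{\mathfrak{p}}$ for every $m$, and this is inherited by the compositum, so $L$ has local degree $1$ at every prime in $\mathcal{S}$. For the unbounded-local-degree realisation (assuming pairwise coprime $|G_m|$) I would instead arrange, for every $p\in\mathcal{S}$, that each $L_m$ has local degree exactly $|G_m|$ at some prime of $K$ above $p$, for instance by making $L_m$ totally ramified there. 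A Goursat-type argument then shows that any subgroup of $\prod_m G_m$ that surjects onto every factor must equal the whole product, because the $|G_m|$ are pairwise coprime. Applied to the decomposition group of $L$ at a compatible prime above $p$, this forces its order to be divisible by $|G_m|$ for every $m$, so the local degrees of $L$ at primes in $\mathcal{S}$ are unbounded.

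The construction of $L_m$ is case dependent. For groups as in (1), an abelian group of odd order is a quotient of $\prod_i(\Z/\ell_i\Z)^*$ for suitable odd primes $\ell_i$, and by Chebotarev applied to an appropriate Kummer extension one can choose the $\ell_i$ so that the primes in $\mathcal{S}$ are $d_i$-th powers modulo $\ell_i$ (for the split case) or so that their Frobenius generates the cyclic factor of order $d_i$ (for the unbounded case). For groups as in (2), the hypothesis that $|G_m|$ is coprime to $|\mu(K)|$ (always even) forces $|G_m|$ odd, and one realises the solvable $G_m$ by a Scholz--Reichardt style inductive construction, using Grunwald--Wang at each prime-degree step to prescribe the local behaviour; the odd-order hypothesis bypasses the Grunwald--Wang exceptional case. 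For groups as in (3), the extension is built layer by layer following the iterated semidirect product structure, and since the abelian layers have order coprime to the primes of $\mathcal{S}$ the induced local extensions are tame, so local class field theory supplies the flexibility to prescribe splitting or full ramification at each stage.

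I expect case (3) to be the main obstacle: at each layer one must realise an equivariant abelian extension with prescribed local ramification and simultaneously verify that the Galois action of the preceding layers extends to the prescribed semidirect structure. This forces a careful inductive descent in which at each stage the abelian module controlling the next layer must admit both the required Galois action and the prescribed local conditions at $\mathcal{S}$, which I expect to hinge on the fact that the tame abelian extensions of a local field form a sufficiently rich Galois module to accommodate both constraints.
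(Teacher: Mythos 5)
Your overall strategy---realise each $G_m$ by a finite extension $L_m/K$ with controlled local behaviour at $\mathcal{S}$, ensure linear disjointness, and take the compositum---is the same skeleton as the paper's, which derives the statement from a general lemma (Theorem~\ref{main1}) plus the Grunwald-type theorems of Grunwald--Wang, Neukirch, and Demarche--Lucchini Arteche--Neftin for the three cases. However, your proposal has two genuine gaps in the way the local conditions are prescribed.

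First, you ask each $L_m$ to split completely (resp.\ be totally ramified) at \emph{every} prime of $K$ above a prime of $\mathcal{S}$. Since $\mathcal{S}$ can be an infinite set (e.g.\ all rational primes), this is impossible for a nontrivial finite extension: a finite Galois $L_m/K$ is ramified only at finitely many primes, and Chebotarev forces infinitely many primes not to split completely. The paper's fix is to impose local conditions on $L_m$ only at the first $m$ primes $v_1,\ldots,v_m$ of $\mathcal{T}$, which lowers the goal: the local degree of the compositum at $p_i$ is then bounded not by $[K:\Q]$ but by $[K:\Q]\prod_{j<i}|G_j|$, since only the factors $L_m$ with $m\geq i$ are guaranteed to split there. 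Your proof as written has no such mechanism and would require the bounded-degree realisation to split at all of $\mathcal{S}$, which it cannot.

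Second, in the unbounded case you want the decomposition group of $L_m$ at a prime above $p$ to be all of $G_m$ (``local degree exactly $|G_m|$ \ldots\ totally ramified''). This is generally unattainable: Galois groups of finite extensions of $\Q_p$ are severely constrained (the tame quotient is generated by two elements with a specific relation, the wild inertia is a $p$-group), so for instance $(\Z/q\Z)^3$ with $q\neq p$---an abelian group of odd order that is perfectly admissible under condition~(1)---is never a decomposition group at $p$. Your Goursat argument is then built on an unachievable hypothesis. The paper avoids this entirely by invoking only the \emph{cyclic} Grunwald problem: the unbounded-exponent hypothesis supplies, for every $n$, an index $m_n$ with a cyclic subgroup $\mathcal{C}_n\leq G_{m_n}$ of order $>n$, and one prescribes the decomposition group at $p_i$ to be $\mathcal{C}_n$ for $L_{m_n}$; this is a local condition that \emph{is} always realisable, and already forces the local degrees of the compositum to be unbounded. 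Your proposal does not use the unbounded-exponent hypothesis at all, which is a sign something is off, since without it the conclusion of the theorem is false.

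A smaller point: in cases (1)--(3) the paper simply cites the known positive answers to the corresponding Grunwald problems rather than re-proving them by hand; your sketches for (2) and (3) via Scholz--Reichardt and layer-by-layer constructions are essentially what those references do, but carrying them out correctly with prescribed local conditions is substantially harder than you indicate, and it is cleaner to invoke those results as black boxes as the paper does.
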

The proof of Theorem \ref{T1} is based on the existence of solutions to certain \emph{Grunwald problems}. The Grunwald problem, a strong variant of the inverse Galois problem, asks whether,  given a finite group $G$ and a number field $K$, there exists a realisation of $G$ over $K$ having prescribed completions at a given finite set of primes. In Section \ref{Section1}, Theorem \ref{main1}, we prove that the direct product of the groups $G_m$ can be realised in two ways (with bounded or unbounded local degrees at any fixed set of primes) whenever every \emph{cyclic} Grunwald problems for $G_m$ has \emph{enough} solutions. Theorem \ref{T1} is then a corollary of Theorem \ref{main1} and results of  Grunwald, Wang, Neukirch, Demarche, Lucchini Arteche and Neftin. At the end of Section \ref{Section1} the link between our problem and property (B) for groups is discussed in more details.

The proof of Theorem \ref{T1} does not produce explicit realisations. A more explicit construction for certain abelian groups is given in Section \ref{Section3}, where a link between the boundedness of the local degrees  of a field and the degree of its finite monogenic subextensions is also discussed.

In Section \ref{Section4}, Proposition \ref{t31}, we show that the existence of a realisation with bounded local degrees at some primes holds also for other classes of profinite groups than direct products: these groups are constructed as  iterated semidirect products by finite nilpotent groups. 
The proof relies on the existence of solutions to certain embedding problems with nilpotent kernel (a central step in the proof of Shafarevich's theorem on the realisability of finite solvable groups). This, however, produces extensions with a quite special local structure and this approach seems to be difficult to exploit for more general families of profinite groups.

\section{The Grunwald problem and proof of Theorem \ref{T1}}\label{Section1}
Let $K$ be a number field. If $v$ is a prime of $K$, we denote by $K_{v}$ the completion of $K$ with respect to $v$. Let $G$ be a finite group and $\mathcal{T}$ a finite set of primes of $K$. The \emph{Grunwald problem} asks whether for every choice of finite Galois extensions $L_v$ of $K_v$, $v\in\mathcal{T}$, of group $G_v$ embeddable in $G$, there exists a finite Galois extension $L/K$ of group $G$ having completions isomorphic to $L_v$ at the primes above  $v$, for all $v\in\mathcal{T}$. 

When the answer is affirmative, we call $L$ a \emph{solution of the Grunwald problem for $(G,\mathcal{T},\{G_v\}_{v\in\mathcal{T}})$}.

The Grunwald problem is shown to have a positive answer, for example, when $G$ is an abelian group of odd order by the Grunwald-Wang theorem (see\cite{Gru}, \cite{Wan}), $G$ is a solvable group of order prime to the order of the group of roots of unity in $K$ (by Neukirch \cite{Neuk}), $G$ has a generic extension over $K$ (this means that all extensions of $K$ of group $G$ are obtained by specialisations of the same function field extension, see Saltman's work \cite{Sal}), $G$ is regular over $K$, the local extensions are unramified and the set $\mathcal{T}$ only contains primes bigger than a certain constant depending on $G$ (by D\`ebes and Ghazi \cite{DG}), $G$  is an iterated product of abelian groups and the primes in $\mathcal{T}$ do not divide the order of $G$ (by Demarche, Lucchini Arteche and Neftin \cite{DLAN}).
Conjecturally (see Colliot-Th\'el\`ene's paper \cite[Section 2]{CT} and also \cite[Section 2.5]{DLAN})  for every group $G$, there is a set of 'bad' primes such that if $\mathcal{T}$ is disjoint from this set, then the Grunwald problem has a positive answer.

To prove Theorem \ref{T1}, we need to construct realisations of direct products of groups with special local behaviours. The construction relies on the existence of solutions to certain cyclic Grunwald problems and on the fact that such solutions can be chosen to  be linearly disjoint. More precisely, we have the following:
\begin{theorem}\label{main1}
Let $\mathcal{S}=\{p_1, p_2, p_3,\ldots\}$ be a  set of rational primes and let $K$ be a number field. Let $\mathcal{T}=\{v_1,v_2,v_3,\ldots\}$ be a set of primes of $K$ such that $v_i$ lies above $p_i$ for every $i$ and set $\mathcal{T}_m=\{v_1,\ldots\,v_m\}$. 
Let $\{G_m\}_m$ be a family of finite groups with unbounded exponents and denote by $G=\prod_{m\geq 1} G_m$ their direct product. 

Suppose that, for every choice of a family $\{\{G_{m,v}\}_{v\in\mathcal{T}_m}\}_m$ where, for every $m$ and for every $v\in\mathcal{T}_m$, $G_{m,v}$ is a cyclic group  embeddable in $G_m$, there exists a family $\{L_m\}_m$ of Galois extensions of $K$ such that, for every $m$, $L_{m}/K$ is a solution to the Grunwald problem for $(G_m,\mathcal{T}_m, \{G_{m,v}\}_{v\in \mathcal{T}_m})$ and $L_m$ is linearly disjoint, over $K$, from the compositum of the fields $L_1,\ldots, L_{m-1}$.

Then $G$ admits two realisations over $K$, one with local degrees at all primes $p_i\in\mathcal{S}$  bounded by $[K:\Q]$ if $i=1$ and by $[K:\Q]\prod_{j=1}^{i-1} |G_j|$ if $i\geq 2$, and one with unbounded local degrees at all primes in $\mathcal{S}$.
\end{theorem}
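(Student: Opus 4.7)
Both realisations will be built as composita of Galois extensions $L_m/K$ of group $G_m$, obtained by solving cyclic Grunwald problems with carefully prescribed completions at $\mathcal{T}_m$, chosen linearly disjoint over $K$ so that $\Gal(L/K) = G = \prod_m G_m$ for $L := \bigcup_m L_1 \cdots L_m$. The local degree of $L$ at $p_i$ is then controlled via the standard fact that restriction of $\Gal(L/K)$ to a finite Galois subextension $M$ of $L/K$ maps the decomposition group of a place $w$ of $L$ \emph{onto} that of $w \cap M$. The idea is to choose the prescribed local data so that the decomposition groups at primes over $v_i$ become small (bounded case) or large (unbounded case); the main things to check, once the construction is set up, are this surjectivity and the identification of $\bigcap_{m \ge i} \Gal(L/L_m)$ inside $G$ with $\prod_{k<i} G_k$.

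\textbf{Bounded realisation.} Take $G_{m,v_j} = 1$ for every $m$ and every $v_j \in \mathcal{T}_m$; by hypothesis the resulting cyclic Grunwald problem has a solution $L_m/K$, linearly disjoint from $L_1 \cdots L_{m-1}$, and this $L_m$ splits completely at each $v_j$ with $j \le m$. Identifying $\Gal(L/L_m) \subseteq G$ with $\prod_{k \ne m} G_k$, complete splitting translates into $D(w \mid v_i) \subseteq \prod_{k \ne m} G_k$ for every $m \ge i$; intersecting over these $m$ yields $D(w \mid v_i) \subseteq \prod_{k < i} G_k$, and therefore
\[
[L_w : \Q_{p_i}] = |D(w \mid v_i)| \cdot [K_{v_i} : \Q_{p_i}] \le [K : \Q] \prod_{k=1}^{i-1} |G_k|,
\]
the empty product being $1$ when $i=1$, exactly matching the claimed bound.

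\textbf{Unbounded realisation.} A finite group whose maximum element order equals $M$ has exponent dividing $\mathrm{lcm}(1, \ldots, M)$, so the unboundedness of the exponents of $\{G_m\}$ forces the maximum element orders $f_m$ to be unbounded in $m$. For each $m$, pick $g_m \in G_m$ of order $f_m$, set $C_m = \langle g_m \rangle$, and for every $v_j \in \mathcal{T}_m$ prescribe as local completion the unramified cyclic extension of $K_{v_j}$ of degree $f_m$ (whose Galois group is isomorphic to $C_m$); by hypothesis this cyclic Grunwald problem has a solution $L_m/K$ linearly disjoint from $L_1 \cdots L_{m-1}$. For any place $w$ of $L$ above $v_i$ and any $m \ge i$, the surjective restriction $\Gal(L/K) \to G_m$ sends $D(w \mid v_i)$ onto a conjugate of $C_m$, so $f_m$ divides $|D(w \mid v_i)|$; since $\sup_{m \ge i} f_m = \infty$, $D(w \mid v_i)$ is infinite and $L$ has infinite local degree at $p_i$. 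The construction is mechanical once the cyclic Grunwald input is granted, and the only real issue is the bookkeeping of decomposition groups described in the Plan.
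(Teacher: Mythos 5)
Your proof is correct and takes essentially the same approach as the paper: solve cyclic Grunwald problems for the factors $G_m$ with trivial prescribed local groups (bounded case) or with cyclic local groups of growing order (unbounded case), compose the linearly disjoint solutions, and read off the local degrees. The only (cosmetic) deviation is in the unbounded realisation: the paper passes to a subsequence $\{m_n\}$ of indices where a cyclic subgroup of order $>n$ exists and uses trivial local data elsewhere, whereas you prescribe the maximal cyclic subgroup $C_m$ at every index $m$; both are valid choices allowed by the hypothesis and lead to the same conclusion.
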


\begin{proof}

\emph{Realisation with bounded local degrees.}
For every $m\geq 1$ and  for every $v\in\mathcal{T}_m$, take $G_{m,v}=1$ equal to the identity group. By hypothesis, there exists a family of extensions of $K$, $\{L_m\}_m$, such that $L_m/K$ is a solution of the Grunwald problem $(G_m,\mathcal{T}_m, \{1\}_{v\in \mathcal{T}_m})$ and $L_m$ is linearly disjoint over $K$ from the compositum of the fields $L_1,\ldots, L_{m-1}$.

Denote by $L$ the compositum of all $L_m$'s. Then $L$ is a Galois extension of $K$ of group $G$ and its local degrees at $p_i\in \mathcal{S}$ are bounded by $[K:\Q]$ if $i=1$ and by $[K:\Q]\prod_{j=1}^{i-1} |G_j|$ if $i\geq 2$. 

\smallskip

\emph{Realisation with unbounded local degrees.} As the exponents of the groups $G_m$ are unbounded, for every  integer $n\geq 2$, there exists an index $m_n\neq m_{n-1}$ (with $m_1=1$) such that the group  $G_{m_n}$ possesses a cyclic subgroup $\mathcal{C}_n$ of order  bigger than $n$. Moreover the $m_n$'s can be chosen so that $m_n<m_{n+1}$ for every $n\geq 1$. We denote by $\mathcal{I}=\{m_n\}_{n\geq 1}$ the set of such indexes (clearly the set $\mathcal{I}$ is infinite and unbounded).

Now, for every $m\geq 1$ and for every $v\in \mathcal{T}_m$, set $G_{m,v}=1$ if $m\not\in \mathcal{I}$ and $G_{m,v}=\mathcal{C}_n$ if $m=m_n\in\mathcal{I}$.
By hypothesis, there exists a family $\{L_m\}_m$ of Galois extensions of $K$ such that, for every $m\geq 1$, $L_m/K$ is a solution of the Grunwald problem $(G_m,\mathcal{T}_m, \{G_{m,v}\}_{v\in \mathcal{T}})$ and $L_m$ is linearly disjoint over $K$ from the compositum of the fields $L_1,\ldots, L_{m-1}$.

Denoting by $L$ the compositum of all $L_m$'s, we have that $L$ is a Galois extension of $K$ of group $G$ and its local degrees at any prime $p_i\in\mathcal{S}$ are unbounded as, for all $m_n\in\mathcal{I}$ with $m_n\geq i$, the local degrees of $L_{m_n}/K$ at $p_i$ are bigger than $n$.
\end{proof}

Now Theorem \ref{T1} follows as a corollary of Theorem \ref{main1} and the above cited results.
\begin{proof}[Proof of Theorem \ref{T1}]
Let $\{G_m\}_m$ be a family of finite groups such that, for every $m$, $G_m$ satisfies one of the three conditions \eqref{l1}, \eqref{l2} or \eqref{l3} of Theorem \ref{T1}. Then, by the above cited results, the Grunwald problem has a positive answer for $G_m$: more precisely, this follows from the Grunwald-Wang theorem (see \cite{Gru}, \cite{Wan}) in case \eqref{l1}, from Neukirch's result \cite{Neuk} in case \eqref{l2} and from the recent work of Demarche, Lucchini Arteche and Neftin \cite{DLAN} in case \eqref{l3}.

Notice that if the groups $G_m$'s have coprime orders, then all their realisations will be automatically linearly disjoint and we can conclude by Theorem \ref{main1}. 

Assume now that the orders of the groups $G_m$'s are not coprime. We want to show that also in this case we can find a realisation of the group $G=\prod_{m\geq 1} G_m$ over $K$ having bounded local degrees at all primes in $\mathcal{S}$.
Notice that if $\Gamma$ is a finite group which satisfies one of the conditions \eqref{l1}, \eqref{l2} or \eqref{l3} of Theorem \ref{T1}, then the direct product $\Gamma^n$ of $n$ copies of $\Gamma$ will satisfy the same condition for every integer $n\geq 1$.
Indeed, this is clearly true when $\Gamma$ satisfies \eqref{l1} of \eqref{l2}. We now want to show that if $\Gamma_1$ and $\Gamma_2$ are two groups satisfying condition \eqref{l3}, then $\Gamma1\times \Gamma_2$ also satisfies condition \eqref{l3}. By hypothesis, $\Gamma_1$ can be written as $H_1\rtimes_{\varphi}A_1$ where $A_1$ is abelian, $H_1$ satisfies property \eqref{l3} and $\varphi:A_1\rightarrow \mathrm{Aut}(H_1)$. Then $\Gamma_1\times \Gamma_2$ is isomorphic to the semidirect product $(\Gamma_2\times H_1)\rtimes_{\overline{\varphi}} A_1$, where the action of $A_1$ on $\Gamma_2\times H_1$ is trivial on the first factor and is given by $\varphi$ on the second. One can conclude by induction, as  $|\Gamma_2\times H_1|<|\Gamma_1\times \Gamma_2|$.

So, by the above cited results,  if $\Gamma$ satisfies one of the three conditions \eqref{l1}, \eqref{l2} or \eqref{l3}, for every $n$, for every finite subset of primes $\mathcal{T}$ of $K$ above some primes in $\mathcal{S}$ and setting, for every $v\in \mathcal T$,  $\Gamma_{v}^n=1$ equal to the trivial group, the Grunwald problem for $(\Gamma^n,\mathcal{T}, \{1\}_{v\in \mathcal{T}})$ has a solution $M_n$.  
Thus, by Galois theory, for every fixed finite extension $F/K$, if $n$ is big enough, we can choose a subextension of $M_n$ giving a solution to the Grunwald problem for $(\Gamma,\mathcal{T}, \{1\}_{v\in \mathcal{T}})$ linearly disjoint from $F$ over $K$.
In particular, if $\mathcal{S}=\{p_1,p_2,\ldots\}$ and $\mathcal{T}_m=\{v_1,\ldots,v_m\}$ are primes of $K$, with $v_i$ above $p_i$ for all $i$'s, there exists a family of extensions of $K$, $\{L_m\}_m$, such that $L_m/K$ is a solution of the Grunwald problem $(G_m,\mathcal{T}_m, \{1\}_{v\in \mathcal{T}_m})$ and $L_m$ is linearly disjoint over $K$ from the compositum of $L_1,\ldots, L_{m-1}$. The compositum of all such $L_m$'s is then a realisation of $G$ over $K$ with the required properties.
\end{proof}
\begin{remark}
In \cite[Theorem 1.2 and Corollary 1.5]{DG} D\`ebes and Ghazi proved that if $G$ is a finite group which is regular over $K$ (i.e. there exists a Galois extension $E/K(T)$ of group $G$ such that $E \cap \overline{K}=K$), and if the set $\mathcal{S}$ does not intersect  a finite set of 'bad primes' for the extension $E/K(T)$ (see \cite{DG}, Theorem 1.2, condition \emph{(good red)} for a precise definition), then every unramified Grunwald problem has a solution. More precisely, if $v$ is a prime of $K$ above some prime in $\mathcal{S}$, for every family of unramified extensions $F_v/K_v$ of group embeddable in $G$, there is a Galois extension $F/K$ of group $G$ having these local fields as completions. 

Via this result and using Theorem \ref{main1}, Theorem \ref{T1} can be extended to realisations of direct products of groups $\{G_m\}_m$ such that, for every $m$:
\begin{enumerate}[(i)]
\item\label{cc1} $G_m$ is regular over $K$;
\item\label{cc2} the set $\mathcal{S}$ does not intersect a finite set of 'bad' primes for $G_m$ (in particular for every $p\in\mathcal{S}$, $p \nmid |G_m|$).
\end{enumerate}
Notice that, if the answer to the \emph{Regular Inverse Galois Problem} is affirmative, then condition \eqref{cc1} holds for any finite group. 
So conjecturally, Theorem \ref{T1} should be true for any direct product of finite groups of unbounded exponents, provided that the set $\mathcal{S}$ satisfies condition \eqref{cc2}.
\end{remark}
\subsection{Some remarks related to property (B)}
We end the section with some speculations on the link between the boundedness of the local degrees for a Galois extension and property (B) for groups, whose definition is recalled in the Introduction.

As already said, in \cite{ADZ} property (B) is shown  to be true for groups $G$ whose inner automorphism group has finite exponent and a natural question is to understand whether there are other families of groups with (B).  To this aim, we may relax Question \ref{Q1} and consider only one direction. More precisely, let $G$ be a profinite group, $K$ be a number field and $L/K$ a Galois extension of group $G$:
\begin{question}\label{Q2} Does there exist a group theoretical property $P(p,B_p)$ on  $G$ which implies that the field $L$ has local degrees at the prime $p$ bounded by some positive constant $B_p$?
\end{question}
Clearly a positive answer to Question \ref{Q2} would give a new family of groups with (B).

Notice that $G$ satisfies property $P(p,B)$ for every $p$ and for some constant $B$ (independent of $p$) if and only if $G$ has bounded exponent. So, for instance, a na\"ive guess for property $P(p,B_p)$ could be that $\exp(G)$ is divisible by a finite number of primes (depending on $B_p$) and it has finite $p$-adic valuation. However, this cannot be a good formulation; indeed, if $L$ is the compositum of all fields of the form $\Q(\zeta_{2^n})$ and $G=\mathrm{Gal}(L/\Q)$, since $[\Q(\zeta_{2^n}):\Q]=\phi(2^n)=2^{n-1}$, then $G$ is a $2$-group and  the $p$-adic valuation of $\exp(G)$ is 0 for all $p\neq 2$. However $L$ has infinite local degrees above all rational primes, since it contains infinitely many roots of unity (see also Remark \ref{rem_mono} in Section \ref{Section3}).

Theorem \ref{main1} implies that Question \ref{Q2} has a negative answer when $G$ is an infinite direct product of families of groups for which certain Grunwald problems have 'enough' solutions. It is reasonable to believe that the answer to Question \ref{Q2} should be negative for all profinite groups of unbounded exponent, under the assumption of a certain flexibility on the local structure of the solutions of general embedding problems (a problem which, at the current state, seems to be out of reach).
\begin{remark}
Even though without implications on Property (B), it is of some interest to consider also the reverse direction in Question \ref{Q1}, namely: if $K$ is a number field and $L/K$ is a Galois extension of group $G$ with bounded local degrees at all primes, what can be said on the structure of $G$?

A first answer is that $G$ is not torsion free. Assume the contrary, then since $L/K$ is non trivial, there exists at least one prime $v$ of $K$ which ramifies in $L/K$. But if $G$ is torsion free, all decomposition subgroups are either trivial or infinite. Therefore, the decomposition subgroups at $v$ are infinite and the local degrees at $v$ are not bounded. This last also implies the local degrees of the maximal totally real extension of the rationals, $\Q^{tr}/\Q$, cannot be bounded at all primes, as it is proved in \cite{ADZ} that $\mathrm{Gal}(\Q^{tr}/\Q)$ is torsion free (actually, they are unbounded at all primes, as $\Q^{tr}$ contains the splitting fields $K_p$ of the $p$-Eisenstein polynomials $x^3-2px+p$, whose discriminant is $p^2(32p-27)$, and $p$ totally ramifies in $K_p$ for all $p$).

Moreover, it is easy to see that if $L/K$ is Galois, then it has bounded local degrees at $p$ if and only if there exists a finite extension $F/K$, with $F\subseteq L$, such that if $\mathfrak{S}$ is the set of primes of $F$ above $p$, then $L\subseteq F^{tot,\mathfrak{S}}$, where $F^{tot,\mathfrak{S}}$ is the maximal extension of $F$ is which every prime of $\mathfrak{S}$ splits totally. 
This implies in particular, that $G$ has a finite index subgroup which is a quotient of $\mathrm{Gal}(F^{\mathfrak{S}}/F)$. However, while the structure of the absolute Galois group of $F^{\mathfrak{S}}$ has been described by Pop \cite{Pop96} (see also the generalisation by Haran, Jarden and Pop in \cite{HJP}) as a free product of local groups, to the author's knowledge no much is known about the structure of $\mathrm{Gal}(F^{\mathfrak{S}}/F)$, not even whether an analogue result to the one of \cite{ADZ} for $\mathrm{Gal}(\Q^{tr}/\Q)$ holds.
\end{remark}
\section{Monogenic extensions and an explicit example}\label{Section3}
In this section we will construct explicitly two realisations over $\Q$, one with bounded local degrees at all primes, and one with unbounded local degrees at all primes, of a certain infinite direct product of abelian groups of even orders (a case not covered  by Theorem \ref{T1}). 
\subsection{Local degrees and monogenic extensions}
We start with a result concerning the link between the boundedness of the local degrees for a field $L$ and the boundedness of the degrees of its finite monogenic subextensions. 
\begin{proposition}\label{finiteroot}
Let $L$ be an infinite algebraic extension of $\Q$ with bounded local degrees at the rational prime $p$. Let $\{\alpha_m\}_m$ be a family of algebraic integers in $L$ of degrees tending to infinity.  Set $L_m=\mathbb{Q}(\alpha_m)$ and denote by $\mathcal{O}_{L_m}$ its ring of integers. Then $p$ divides the index $[\mathcal{O}_{L_m}:\mathbb{Z}[\alpha_m]]$ for all but finitely many $m$.

In particular, if the local degrees of $L$ at $p$ are bounded by $B$, all the finite monogenic subextensions of $L$ have degree at most $p^{B^2+1}B^2$.
\end{proposition}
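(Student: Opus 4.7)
The core tool I would use is the classical Kummer--Dedekind theorem: for an algebraic integer $\alpha$ with minimal polynomial $f\in\Z[x]$, provided $p\nmid[\mathcal{O}_{\Q(\alpha)}:\Z[\alpha]]$, the factorisation $f(x)\equiv\prod_i g_i(x)^{e_i}\pmod{p}$ into distinct monic irreducibles over $\mathbb{F}_p$ describes exactly how $p$ splits in $\Q(\alpha)$: the primes above $p$ have ramification indices $e_i$ and residue degrees $\deg g_i$. Consequently every local degree of $\Q(\alpha)$ above $p$ equals some product $e_i\deg g_i$.

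For the first assertion I would argue by contradiction. Assume $p\nmid[\mathcal{O}_{L_m}:\Z[\alpha_m]]$ for infinitely many $m$ and pass to such a subsequence. Let $B$ be a uniform bound on the local degrees of $L$ above $p$; since each $L_m$ embeds in $L$, every local degree of $L_m$ above $p$ is itself at most $B$. Applying Kummer--Dedekind to the minimal polynomial $f_m$ of $\alpha_m$, this forces $e_{m,i}\deg g_{m,i}\leq B$ for every $i$, and in particular $\deg g_{m,i}\leq B$.

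The counting step is then elementary: the distinct $g_{m,i}$ are monic irreducibles of degree at most $B$ over $\mathbb{F}_p$, so there are at most $\sum_{d=1}^{B} p^d \leq p^{B+1}$ of them. Summing the Kummer--Dedekind relation yields
\[
[L_m:\Q]=\sum_i e_{m,i}\deg g_{m,i}\leq B\cdot p^{B+1},
\]
a uniform upper bound contradicting $[L_m:\Q]\to\infty$, which proves the first statement.

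The ``in particular'' clause then follows at once: if $F\subseteq L$ is finite monogenic, meaning $\mathcal{O}_F=\Z[\alpha]$ for some $\alpha$, then $[\mathcal{O}_F:\Z[\alpha]]=1$ is automatically coprime to $p$, so the previous estimate applies directly to $F$ and gives $[F:\Q]\leq B\cdot p^{B+1}$, which is comfortably below the claimed bound $B^2\, p^{B^2+1}$. I do not anticipate any genuine obstacle: the argument is essentially bookkeeping once Kummer--Dedekind is invoked, and the only mild care required is to fix the convention that ``monogenic'' means the ring of integers admits a power basis (so that the index condition is automatic) and to read off local degrees correctly from the factorisation modulo $p$.
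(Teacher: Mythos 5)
Your proof is correct and follows essentially the same route as the paper's: argue by contradiction, apply Dedekind's (Kummer--Dedekind's) criterion under the hypothesis $p\nmid[\mathcal{O}_{L_m}:\mathbb{Z}[\alpha_m]]$, then bound $\deg \overline{f_m}$ by counting the possible irreducible factors modulo $p$. Your slightly sharper bookkeeping, using $e_{m,i}\deg g_{m,i}\le B$ directly instead of bounding $e_{m,i}$ and $\deg g_{m,i}$ by $B$ separately and multiplying, gives the marginally tighter bound $B\,p^{B+1}$, which of course implies the $B^2 p^{B^2+1}$ stated in the proposition.
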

\begin{proof}
Suppose that the index $[\mathcal{O}_{L_m}:\mathbb{Z}[\alpha_m]]$ is prime to $p$ for infinitely many $m$. For every $m$, let $f_m(x)\in \mathbb{Z}[x]$ be the minimal polynomial of $\alpha_m$. Then, by Dedekind's criterion, for infinitely many $m$'s the splitting of $p$ in $\mathcal{O}_{L_m}$ is determined by the factorisation of $f_m(x)$ mod $p$.
More precisely, let \[\overline{f_m(x)}=\overline{g_{m_1}(x)}^{e_{m_1}}\cdots \overline{g_{m_n}(x)}^{e_{m_n}}\] be the factorisation into coprime factors and set $f_{m_j}=\deg(\overline{g_{m_j}(x)})$. Then the integers $e_{m_i}$'s and the $f_{m_i}$'s are, respectively, the ramification indexes and inertia degrees of the primes of $\mathcal{O}_{L_m}$ above $p$.
By assumption, for all $m$ and for all $i$, $e_{m_i}$ and $f_{m_i}$ are both bounded by a constant. Thus the number of possible factors $\overline{g_{m_j}(x)}$, and hence the number of possible polynomials $\overline{f_m(x)}$, is finite. In particular, for all $m$, the degree of $\overline{f_m(x)}$ is bounded. 
On the other hand, we have $\deg \overline{f_m(x)}=\deg f_m(x)$, contradicting the fact that $\{\alpha_m\}_m$ is a family of algebraic integers of degree tending to infinity.

In particular, if $B$ is a bound for  the local degrees of $L$ at $p$ and if $\alpha_m$ generates a monogenic extension inside $L$, then, the coprime factors of $f_m(x)$ modulo $p$ have degree at most $B^2$ and so the number of such factors is clearly at most $p^{B^2+1}$.  Therefore the degree of $f_m(x)$ is bounded by $p^{B^2+1}B^2$.
\end{proof}
\begin{remark}\label{rem_mono}
Proposition \ref{finiteroot} implies, in particular, that:
\begin{enumerate}[(i)]
\item $L$ contains finitely many roots of unity, as cyclotomic fields are monogenic (the finiteness of the roots of unity in $L$ can also be proved by direct computations on the ramification of primes in cyclotomic extensions);
\item all roots of $p$-Eisenstein polynomials in $L$ are algebraic elements of bounded degree (this follows from a well-known result \cite[Section 2.5, Exercise 19]{BS}).
\end{enumerate}
\end{remark}

\subsection{A more explicit construction}
Consider the set \[\Lambda=\{q \text{ prime}\ \mid q-1 \text{ is square free}\}.\] 
By \cite{Mi} (see also \cite[Section 6.2.2]{HM}) this is an infinite set.

For every $n$, we denote by $C_n$ the cyclic group of order $n$.
We have then the following:
\begin{theorem}\label{expl-real}
Let $\mathcal{S}$ be a set of rational primes. 
Let
$$G=\prod_{q\in \Lambda} C_{q-1}.$$  
Then $G$  admits two realisations over $\Q$, one with infinite local degrees at all the primes in $\mathcal{S}$ (and actually, at all the rational primes)  and one with bounded local degrees at all the primes in $\mathcal{S}$.
\end{theorem}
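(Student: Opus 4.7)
The plan is to produce the two realisations of $G$ explicitly. For unbounded local degrees I would use the full cyclotomic compositum; for bounded local degrees at the primes of $\mathcal{S}$ I would run an inductive construction selecting cyclic subfields of suitably chosen prime-cyclotomic fields via Chebotarev's density theorem.

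For the first realisation, take $L^{+}:=\mathbb{Q}(\zeta_{q}\mid q\in\Lambda)$. For distinct primes $q,q'\in\Lambda$ the fields $\mathbb{Q}(\zeta_{q})$ and $\mathbb{Q}(\zeta_{q'})$ are totally ramified at $q$ and at $q'$, respectively, and unramified everywhere else, so they are linearly disjoint over $\mathbb{Q}$. Hence $\mathrm{Gal}(L^{+}/\mathbb{Q})=\prod_{q\in\Lambda}C_{q-1}=G$. Since $L^{+}$ contains infinitely many distinct roots of unity, Remark~\ref{rem_mono}(i) immediately forces the local degrees of $L^{+}$ at every rational prime to be unbounded.

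For the second realisation, enumerate $\mathcal{S}=\{p_{1},p_{2},\ldots\}$ and $\Lambda=\{q_{1},q_{2},\ldots\}$ and construct inductively cyclic extensions $L_{m}/\mathbb{Q}$ of degree $q_{m}-1$ in which every $p_{i}$ with $i\leq m$ splits completely and which are linearly disjoint from $L_{1}\cdots L_{m-1}$. At step $m$, I would apply Chebotarev's density theorem to the Galois extension
\[
M_{m}:=\mathbb{Q}\bigl(\zeta_{q_{m}-1},\,p_{1}^{1/(q_{m}-1)},\ldots,p_{m}^{1/(q_{m}-1)}\bigr)
\]
to obtain infinitely many rational primes splitting completely in $M_{m}$, and pick such an $r_{m}$ distinct from $r_{1},\ldots,r_{m-1}$ and from $p_{1},\ldots,p_{m}$. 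The splitting of $r_{m}$ in $M_{m}$ is equivalent to $r_{m}\equiv 1\pmod{q_{m}-1}$ together with the statement that each $p_{i}$ is a $(q_{m}-1)$-th power in $\mathbb{F}_{r_{m}}^{\times}$. Now let $L_{m}$ be the unique subfield of $\mathbb{Q}(\zeta_{r_{m}})$ of degree $q_{m}-1$ over $\mathbb{Q}$; it is cyclic because $(\mathbb{Z}/r_{m}\mathbb{Z})^{\times}$ is cyclic, and the subgroup fixing $L_{m}$ is exactly the group of $(q_{m}-1)$-th powers, so every $p_{i}$ with $i\leq m$ has trivial Frobenius in $\mathrm{Gal}(L_{m}/\mathbb{Q})$ and therefore splits completely in $L_{m}$.

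Linear disjointness of the $L_{m}$ is immediate from ramification: $L_{m}$ is totally ramified at $r_{m}$, while $L_{1}\cdots L_{m-1}$ is unramified at $r_{m}$. The compositum $L:=\prod_{m\geq 1}L_{m}$ therefore has Galois group $\prod_{m}C_{q_{m}-1}=G$, and for each $p_{i}\in\mathcal{S}$ the local degree of $L$ at $p_{i}$ is bounded by $\prod_{j<i}(q_{j}-1)$, because $p_{i}$ splits completely in every $L_{m}$ with $m\geq i$. The only non-routine point is the Chebotarev existence of $r_{m}$, which is painless once one notices that splitting completely corresponds to the trivial Frobenius, an element always present in $\mathrm{Gal}(M_{m}/\mathbb{Q})$; the squarefreeness of $q_{m}-1$ plays no essential role in the argument, but it is precisely what places this example outside the reach of Theorem~\ref{T1}.
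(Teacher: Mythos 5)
Your proof is correct and the first realisation coincides with the paper's (compositum of the $\Q(\zeta_q)$ for $q\in\Lambda$, infinite local degrees by Remark~\ref{rem_mono}). For the second realisation, however, you follow a genuinely different route. The paper uses the squarefree factorisation $q_i-1=\gamma_{i,1}\cdots\gamma_{i,n_i}$ to reduce to realising cyclic groups $C_\gamma$ of \emph{prime} order: for each such $\gamma$, it invokes Dirichlet's theorem to pick $n+1$ auxiliary primes $\ell\equiv 1\pmod{\gamma}$, builds a $\gamma$-elementary abelian extension $E/\Q$ of degree $\gamma^{n+1}$ inside $\Q(\zeta_{\ell_1\cdots\ell_{n+1}})$, and finds the wanted degree-$\gamma$ cyclic subfield (split at the $n$ chosen primes and disjoint from the previously built field) by a pigeonhole argument in $\Gal(E/\Q)\cong(\Z/\gamma\Z)^{n+1}$. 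You instead realise all of $C_{q_m-1}$ in one step: you apply Chebotarev to the Kummer field $M_m=\Q(\zeta_{q_m-1},p_1^{1/(q_m-1)},\ldots,p_m^{1/(q_m-1)})$ to find a prime $r_m$ that splits completely, which forces $r_m\equiv 1\pmod{q_m-1}$ and each $p_i$ to be a $(q_m-1)$-th power mod $r_m$, so the unique degree-$(q_m-1)$ subfield $L_m\subset\Q(\zeta_{r_m})$ has the required splitting and is linearly disjoint from the earlier layers by ramification at $r_m$. Your route is arguably cleaner: it does not need the decomposition into prime-order cyclic factors, so the squarefreeness of $q-1$ genuinely plays no role. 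One small inaccuracy in your closing remark: the squarefreeness is \emph{not} what places $G$ outside the reach of Theorem~\ref{T1}; it is the fact that the orders $q-1$ are even (so $G_m$ is never of odd order and never of order prime to $|\mu(\Q)|=2$, and condition~\eqref{l3} fails whenever $\mathcal{S}$ meets the primes dividing $q-1$). The squarefreeness is merely a technical device for the paper's pigeonhole construction.
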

\begin{proof}
\emph{A realisation with infinite local degrees.}
Let $L$ be the compositum of all fields $\{\Q(\zeta_{q})\}_{q\in \Lambda}$, where $\zeta_q$ is a primitive $q$-th root of unity. Clearly $L$ is the compositum of fields which are all pairwise linearly disjoint over $\Q$ and so $\mathrm{Gal}(L/\Q)=G$. Moreover, in view of Remark \ref{rem_mono}, $L$ has infinite local degree at every rational prime, since it contains infinitely many roots of unity.

\emph{A realisation with bounded local degrees.}  Fix a prime number $q$, a number field $F$ and a finite set of rational primes $\mathcal{T}$. We first show how to construct explicitly a Galois extension of $L/\Q$ cyclic of degree $q$ such that $L\cap F=\Q$ and such that all primes in $\mathcal{T}$ totally split in $L$.

Suppose that $|\mathcal{T}|=n$. 
Dirichlet's theorem on primes in arithmetic progressions ensures that there exists a set of primes $\mathcal{L}=\{\ell_1, \cdots , \ell_{n+1}\}$ such that for every $i=1,\ldots,n+1$ we have $\ell_i$ is congruent to 1 modulo $q$, $\ell_i\not\in\mathcal{T}$ and $\Q(\zeta_{\ell_i})\cap F=\Q$, where $\zeta_{\ell_i}$ is a primitive $\ell_i$-th root of unity.

Clearly the field $\Q(\zeta_{\ell_1\cdots\ell_{n+1}})$ contains a subfield $E$ such that $E/\Q$ is Galois with Galois group $q$-elementary abelian of order $q^{n+1}$. In particular, the extension $E/\Q$ is unramified outside the set of primes $\ell_1, \ldots , \ell_{n+1}$, so in particular at the primes in $\mathcal{T}$.

Consider the subfield of $E$ fixed by all elements $\{\sigma_p\mid\ p\in \mathcal{T}\}$ where $\sigma_p$ denotes the Frobenius at $p$ (which generates the cyclic decomposition group at $p$). This extension has degree at least $q$ over $\Q$ and,  in particular,  contains a cyclic subfield $L$ of degree $q$ over $\Q$. We remark that, by construction, $L\cap F=\Q$  and every prime $p\in\mathcal{T}$ splits totally in $L/\Q$.

Now, write $\mathcal{S}=\{p_1, p_2,\ldots\}$ and $\Lambda=\{q_1,q_2,\ldots\}$ with $p_j< p_i$ and $q_i<q_j$ for $i< j$. We have $q_i-1=\gamma_{i,1}\cdots\gamma_{i,n_i}$ where $\gamma_{i,j}$ are pairwise distinct primes and $C_{q_i-1}=\prod_{j=1}^{n_i} C_{\gamma_{i,j}}$.

By the above result, we can construct inductively, for every $i\geq 1$ and for every $j=1,\ldots,n_i$, a realisation $L_{i,j}/\Q$ of the group $C_{\gamma_{i,j}}$ over $\Q$ which is linearly disjoint from any $L_{m,k}$ with $m<i$ or $k<j$ and in which all the primes $p_1,\ldots,p_i$ split totally. The compositum $L/\Q$ of all the fields $L_{i,j}$ is therefore a Galois extension of group $G$ having local degrees at the prime $p_n\in \mathcal{S}$ bounded by $\prod_{k=1}^n (q_k-1)$.
\end{proof}

\section{A construction with iterated semi-direct products}\label{Section4}
In this section we show the existence of realisations with bounded local degrees at some primes for certain profinite groups which are not necessarily direct products.
The construction here is based on the existence of solutions of certain embedding problems with special local properties.

Let $\Gamma$ and $G$ be finite groups with a surjective homomorphism $\varphi:\Gamma \twoheadrightarrow G$ and set $H=\ker(\varphi)$. Let $F/K$ be a normal extension of number fields with Galois group $G$. 

The embedding problem for $(F/K, \Gamma, H)$ asks whether there exists a finite Galois extension $L/K$ of group $\Gamma$ such that $F\subseteq L$ and for every $\sigma\in \mathrm{Gal}(L/K)$, we have $\sigma\restriction_{F}=\varphi(\sigma)$. If this exists, it is called a solution of the embedding problem and $H$ is called the kernel of the embedding problem.

In \cite{Sha2} (see also \cite[Theorem 9.5.10]{NSW}) Shafarevich proves that every \emph{split} embedding problem with nilpotent kernel has a solution (this is the fundamental step in the proof of  the realisability of  finite solvable groups over number fields). More precisely, he proves that if $G$ and $H$ are finite groups with $H$ nilpotent and $G$ acting on $H$,  $\Gamma=H\rtimes G$ and $F/K$ is a finite Galois extensions of number fields of group $G$, then the embedding problem for $(F/K,\Gamma,H)$ has a solution  $L/K$.

The solution $L/K$ obtained has however special local properties and using this approach it is not possible to realise any given local behaviour. In particular, our construction is based on the fact that all primes ramifying in $F/K$ split totally in $L/F$ (see \cite[Theorem 9.5.11 (i)] {NSW}). 

We have the following:
\begin{proposition}\label{t31} Let $K$ be a number field and denote by $\mu(K)$ the group of roots of unity in $K$. 
Let $\{G_m\}_m$ be a family of finite groups such that $G_1$ is solvable and, for every $m\geq 2$, we have $G_m=N_m\rtimes G_{m-1}$ where $N_m$ is nilpotent.  Let $G$ be the inverse limit of the family  $\{G_m\}_m$ (the bonding maps being the natural projection from $G_m$ to $G_{m-1}$).
 Then:
\begin{enumerate}
\item\label{C1} for every positive integer $B$, there exists a Galois extension $L/K$ of Galois group $G$ having local degrees at at least one prime $p>B$ bounded by $|G_1|[K:\Q]$.
\item\label{C3}  Suppose that, for some $m\geq 1$, $G_m$ has a metacyclic $p$-subgroup and $\exp(G_m)$ is prime to $|\mu(K)|$. Then then there exists a Galois extension $L/K$ of Galois group $G$ having local degrees at $p$ bounded by $|G_m|[K:\Q]$.
\end{enumerate}

 \end{proposition}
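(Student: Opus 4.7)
The plan is to build $L$ as the union of a tower $L_1\subseteq L_2\subseteq\cdots$ of finite Galois extensions of $K$ with $\mathrm{Gal}(L_i/K)\cong G_i$, where $L_i$ is obtained from $L_{i-1}$ by solving the split embedding problem $(L_{i-1}/K,\,G_i,\,N_i)$ via Shafarevich's theorem on embedding problems with nilpotent kernel. The crucial feature, as recalled in the discussion before the proposition from \cite[Thm.~9.5.11(i)]{NSW}, is that the solution may be chosen so that every prime ramifying in $L_{i-1}/K$ splits completely in $L_i/L_{i-1}$. Consequently, once the prime $p$ ramifies at some stage $L_{m_0}/K$ of the tower, primes above $p$ split completely at every subsequent step, and the local degree of $L=\bigcup_i L_i$ at $p$ coincides with that of $L_{m_0}$, hence is trivially bounded by $[L_{m_0}:\mathbb{Q}]=|G_{m_0}|[K:\mathbb{Q}]$.

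For part (1) it remains to realise the solvable group $G_1$ as some $L_1/K$ in which a prescribed prime $p>B$ ramifies. Writing $G_1$ as an iterated extension by cyclic factors, I realise the first (nontrivial) cyclic quotient as a cyclic extension of $K$ ramified at $p$, which exists by class field theory, and then solve the remaining split embedding problems (which have solvable, hence iteratively nilpotent, kernel) using Shafarevich's theorem; the freedom in the choice of auxiliary primes at each step ensures the initial ramification at $p$ persists in $L_1/K$. Applying the general remark with $m_0=1$ then yields a realisation of $G$ over $K$ with local degree at $p$ bounded by $|G_1|[K:\mathbb{Q}]$.

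For part (2), I similarly need a realisation $L_m/K$ of $G_m$ in which $p$ ramifies, with decomposition group at $p$ contained in $G_m$. This is where the two hypotheses enter: since $\exp(G_m)$ is coprime to $|\mu(K)|$, Neukirch's theorem \cite{Neuk} solves every Grunwald problem for $G_m$ over $K$, so I may prescribe the local extension at $p$ to have Galois group isomorphic to the given metacyclic $p$-subgroup $P\leq G_m$, which is itself realisable as a totally (wildly) ramified Galois extension of $K_v$, $v\mid p$, by the standard description of pro-$p$ Galois groups of $p$-adic fields. The resulting $L_m/K$ then ramifies at $p$ with local degree at $p$ at most $|G_m|[K:\mathbb{Q}]$, and iterating Shafarevich's construction as in part (1) produces $L/K$ with $\mathrm{Gal}(L/K)\cong G$ and with the required bound preserved. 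The main obstacle in both parts is the local realisability: in (1), forcing ramification at an arbitrarily large prescribed prime while keeping the Galois group equal to $G_1$; in (2), ensuring that the prescribed metacyclic $p$-subgroup actually arises as the local Galois group at $p$—precisely the content of the metacyclic $p$-subgroup hypothesis, which supplies the local datum needed to apply Neukirch's theorem and thereby unlock Shafarevich's splitting property at every subsequent step of the tower.
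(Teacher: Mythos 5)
Your overall strategy is the same as the paper's: build $L$ as the union of a tower $K\subseteq K_1\subseteq K_2\subseteq\cdots$ where $K_m$ solves the split embedding problem $(K_{m-1}/K,G_m,N_m)$ via Shafarevich's theorem, and invoke \cite[Theorem 9.5.11(i)]{NSW} so that primes ramifying in $K_{m-1}/K$ split totally in $K_m/K_{m-1}$; once the chosen prime ramifies at some finite stage, the local degree stabilises. Your treatment of part \eqref{C3} is also essentially the paper's (Neukirch to prescribe the local Galois group at $p$ to be the metacyclic $p$-subgroup, which forces ramification, the paper citing Neftin's Lemma 2.10 for realisability of that subgroup over $\Q_p$).

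The gap is in part \eqref{C1}. You try to prove a stronger statement than asserted: you fix the prime $p>B$ in advance and attempt to produce a realisation $L_1/K$ of $G_1$ ramified at the \emph{prescribed} $p$. Your argument for this---``writing $G_1$ as an iterated extension by cyclic factors, realise the first cyclic quotient ramified at $p$, then solve the remaining split embedding problems''---does not go through: a solvable group is built from cyclic (hence nilpotent) factors, but the successive extensions in a composition or chief series are \emph{not} split in general, so Shafarevich's split-embedding-problem theorem does not apply directly to them. The standard Shafarevich proof of realisability of solvable groups is a more delicate reduction (lifting to a suitable iterated split extension, etc.), and it is not clear that one can carry prescribed ramification at a fixed prime $p$ through that reduction; the phrase ``the freedom in the choice of auxiliary primes at each step ensures the initial ramification at $p$ persists'' is precisely the unproved claim. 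Even if one did build a split cover $\Gamma\twoheadrightarrow G_1$ and realised $\Gamma$ ramified at $p$, one would still need the inertia at $p$ not to land in the kernel when passing to the quotient.

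The paper sidesteps all of this by reversing the quantifiers. Since $G_1^n$ is solvable for every $n$, one gets infinitely many pairwise linearly disjoint realisations $K_1^{(i)}/K$ of $G_1$. By Hermite's theorem their discriminants tend to infinity, while the $p$-adic valuation of the discriminant of a field of bounded degree is itself bounded; hence some realisation has a prime $p>\max(B,\mathrm{disc}(K/\Q))$ dividing its discriminant, and any such $p$ ramifies in $K_1/K$. This cleanly produces \emph{some} prime $p>B$ that ramifies at the first stage, which is exactly what part \eqref{C1} requires. I would replace your construction of $L_1$ in part \eqref{C1} by this discriminant/Hermite argument.
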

\begin{proof}
{\bf Proof of \eqref{C1}.}
We start with the group $G_1$. Since this group is solvable, we can construct infinitely many linearly disjoint extensions of $K$ having $G_1$ as Galois group; this follows from the fact that $G_1^n$ is solvable for every $n$, therefore realisable by Shafarevich's theorem.

By Hermite's theorem, there are finitely many number fields with bounded discriminant and, for every prime $p$, the $p$-adic valuation of the discriminant of a number field is bounded solely in terms of the degree of the field (see for instance \cite{BG}, Theorem B.2.12). Therefore, for every $B>0$ we can choose a realisation $K_1/K$ of $G_1$ such that $\mathrm{disc}(K_1/\Q)$ is divisible by a prime $p>\max(B, \mathrm{disc}(K/\Q))$.

Let $v$ be a prime of $K$ above $p$. Then $v$ ramifies in $K_1/K$, since $p$ ramifies in $K_1/\Q$ but not in $K/\Q$ (remember that $p>\mathrm{disc}(K/\Q)$).

From \cite[Theorem 9.5.10 and Theorem 9.5.11(i)]{NSW}, for every $m\geq 2$, as $G_m=N_m\rtimes G_{m-1}$ and $N_m$ is nilpotent, there is a solution $K_m/K$ to the embedding problem $(K_{m-1}/K, G_m,N_m)$
such that the primes ramifying in $K_{m-1}/K$ split totally in $K_{m}/K_{m-1}$. In particular, by the previous considerations, $v$ ramifies in $K_m/K$ for every $m$ (it ramifies already in $K_1/K$) and therefore it splits completely in all extensions of the form $K_{m+1}/K_{m}$.

Set $L=\bigcup_m K_m$. By construction, $\mathrm{Gal}(L/K)=G$ and the local degrees of $L$ at $p$ are bounded by $[K_1:\Q]$.

\smallskip

{\bf Proof of \eqref{C3}.} 
Let $p$ be a rational prime and suppose that there exists an index $m$ such that $G_m$ possesses a $p$-metacyclic subgroup, say $H_{m,p}$ and $\exp(G_m)$ is prime to $|\mu(K)|$.

In view of Lemma 2.10 of \cite{Neftin}, the group $H_{m,p}$ is realisable over any $p$-adic field. Moreover, from Neukirch's result \cite{Neuk}, we can find a realisation $K_m/K$ of $G_m$ such that, for every prime $v$ of $K_m$ above $p$, the local extension $K_{m,v}/K_p$ has Galois group $H_{m,p}$. This implies that $p$ ramifies in $K_m$, otherwise the local extension would be cyclic.

Again, by \cite[Theorem 9.5.10 and Theorem 9.5.11(i)]{NSW}, for every $n>m$ we can find a solution $K_n/K$ of the embedding problem $(K_{n-1}/K, G_n, N_m)$ such that every prime $v$ of $K_n$ above $p$ splits completely in $K_{n}/K_m$. 
Setting $L=\bigcup_n K_n$, by construction $\mathrm{Gal}(L/K)=G$ and the local degrees of $L$ at $p$ are bounded by $[K_m:\Q]$.
 \end{proof}

\begin{remark}
An example of a family $\{G_m\}_m$ satisfying the hypothesis of Proposition \ref{t31} can be easily constructed, as in Theorem \ref{T1}, by taking direct products of nilpotent groups. However, this is not the only possible example.
Indeed, let $G=\mathrm{Aut}(T_2)$ be the group of automorphisms of the rooted binary tree $T_2$.  Then $G$ is the inverse limit of the family of groups $\{G_m\}_m$ where
$G_1=\{1\}$, $G_{m+1}\simeq N_m \rtimes G_m$, where $G_m$ is the automorphism group of the subtree of depth $m$ and $N_m\simeq (\mathbb{Z}/2\mathbb{Z})^m$ (the action is the one of $G_m$ on the subtree in the next level).

However $G$ cannot be written as a direct product of a family of finite nilpotent groups. Indeed, suppose $G=\prod_n H_n$ with $H_n$ finite and nilpotent for every $n$. Then the center of $G$ would be the direct product of the centers of the $H_n$'s, thus infinite, while it consists of only two elements, the identity and the conjugation (which flips all subtrees). 
\end{remark}

\begin{remark} 
Notice that, in point \eqref{C3} of Proposition \ref{t31}, the group $H_{m,p}$ needs just to be not cyclic and realisabe over any $p$-adic field. In particular, if $K=\Q$ and $p$ is odd, we may suppose that  $H_{m,p}$  is any $p$-group with two generators: indeed, by Shafarevich's theorem \cite{Sha}, these groups are realisable over $\Q_p$. 
\end{remark}

\section*{Acknowledgments} { The author wants to thank  Francesco Amoroso for pointing out the relation between local degrees and monogenic subextensions, described in Proposition \ref{finiteroot}, Bruno Angl\`es for suggesting part of the construction in Theorem \ref{expl-real} and  Fran\c cois Dahmani for useful discussions. She also thanks Umberto Zannier and Lukas Pottmeyer for useful comments and for their interest in this article. The author's work has been funded by the ANR project Gardio 14-CE25-0015.}

 \def\cprime{$'$}
\providecommand{\bysame}{\leavevmode\hbox to3em{\hrulefill}\thinspace}
 \providecommand{\MR}{\relax\ifhmode\unskip\space\fi MR }
 \providecommand{\MRhref}[2]{
   \href{http://www.ams.org/mathscinet-getitem?mr=#1}{#2}
 }
 \providecommand{\href}[2]{#2}


\begin{thebibliography}{BMZ07}

\bibitem[AD00]{AD} F. Amoroso and R. Dvornicich, \emph{A lower bound for the height in abelian extensions}, Journal of Number Theory 80, 260--272 (2000).
\bibitem[ADZ14]{ADZ}F. Amoroso, S. David and U. Zannier, \emph{On fields with property (B).} Proceedings of the AMS,  142 (6), 1893--1910 (2014).
\bibitem[AZ00]{AZ1}F. Amoroso and U. Zannier, \emph{A relative Dobrowolski's lower bound
over abelian extensions.} Ann. Scuola Norm. Sup. Pisa Cl. Sci. (4) 29, no. 3, 711--727 (2000).
\bibitem[AZ10]{AZ2}F. Amoroso and U. Zannier, \emph{A uniform relative Dobrowolski's lower
bound over abelian extensions.} Bull. London Math. Soc., 42,
no. 3, 489--498 (2010).
\bibitem[BG]{BG} E. Bombieri and W. Gubler, \emph{Heights in Diophantine Geometry}. Cambridge University Press (2006).
\bibitem[BZ01]{BZ} E. Bombieri and U. Zannier, 
\textit{A note on heights in certain infinite extensions of $\Q$}.
Rend. Mat. Acc. Lincei, 12,  5--14 (2001).
\bibitem[BS66]{BS}A. I. Borevich and I. R. Shafarevich, \emph{Number theory. Translated from the Russian by Newcomb Greenleaf.} Pure and Applied Mathematics, Vol. 20. Academic Press, New York-London (1966).
\bibitem[CZ11]{CZ} S. Checcoli and U. Zannier, \textit{On fields of algebraic numbers with bounded local degrees}. C. R. Acad. Sci. Paris 349, no. 1-2, 11--14  (2011).
\bibitem[Che13]{C} S. Checcoli, \textit{Fields of algebraic numbers with bounded local degrees and their properties}, Trans. Amer. Math. Soc., 365, no. 4, 2223--2240  (2013).
\bibitem[Col03]{CT} J.-L. Colliot-Th\'el\`ene, \emph{Points rationnels sur les fibrations}, in Higher dimen- sional varieties and rational points (Budapest, 2001), Bolyai Soc. Math. Stud., vol. 12, Springer, 171--221  (2003).
\bibitem[DG12]{DG} P. D\`ebes and N. Ghazi, \emph{Galois covers and the Hilbert-Grunwald property.} Ann. Inst. Fourier 62, 989--1013 (2012).
\bibitem[DLAN17]{DLAN} C. Demarche, G. Lucchini Arteche and D. Neftin, \emph{The Grunwald problem and approximation properties for homogeneous spaces.} Annales de l'institut Fourier, 67 no. 3, 1009--1033  (2017).
\bibitem[Gru33]{Gru}W. Grunwald. \emph{Ein allgemeines Existenztheorem f\"ur algebraische Zahlk\"orper}. J. Reine Angew. Math. 169, 103--107 (1933).
\bibitem[Hab13]{Hab}P. Habegger, \emph{Small Height and Infinite Non-Abelian Extensions}, Duke Math. J. 162 no. 11, 1895--2076  (2013). 
\bibitem[HJP09]{HJP} D. Haran, M. Jarden and F. Pop, \emph{The absolute Galois group of the field of totally $S$-adic numbers}. Nagoya Math. Journal 194, 91--147  (2009).
\bibitem[Mir49]{Mi} L. Mirsky, \emph{The number of representations of an integer as the sum of a prime and a $k$-free integer}, Amer. Math. Monthly 56, 17--19  (1949).
\bibitem[HM03]{HM} P. Moree and H. Hommerson, \emph{Value distribution of Ramanujan sums and cyclotomic polynomial coefficients}. ArXiv:math.NT/0307352.
\bibitem[Nef12]{Neftin} D. Neftin, \emph{Admissibility and field relations}. Israel Journal of Mathematics, 19, 559--584 (2012).
\bibitem[Neuk79]{Neuk} J. Neukirch, \emph{On solvable number fields}, Invent. math. 53, 135--164 (1979).
\bibitem[NSW]{NSW} J. Neukirch, A. Schmidt and K. Wingberg, \emph{Cohomology of number fields}.
 Springer, collection Grundlehren der mathematischen Wissenschaften
(2000).
\bibitem[Pop96]{Pop96} F. Pop, \emph{Embedding problems over large fields}, Annals of Mathematics, 144, 1--34 (1996).
\bibitem[Sal82]{Sal}D. Saltman, \emph{Generic Galois extensions and problems in field theory}. Adv. in Math. 43, 250--283 (1982).
\bibitem[Sch73]{Sch} A. Schinzel, \emph{On the product of the conjugates outside the unit circle
of an algebraic number}, Acta Arith., 24 (1973), 385--399. Addendum, ibidem, 26, 329--361 (1973).
\bibitem[Sha56]{Sha} I.R. Shafarevich, \emph{On p-extensions}. Amer. Math. Soc. Transl., Ser.2, 4, 59--72 (1956).
\bibitem[Sha58]{Sha2} I.R. Shafarevich, \emph{The embedding problem for split extensions}, Dokl. Akad. Nauk SSSR 120, 1217--1219  (1958).
\bibitem[Wan50]{Wan}S. Wang, \emph{On Grunwald's theorem}. Ann. of Math. (2) 51, 471--484 (1950).
\end{thebibliography}
\end{document}